\documentclass[12pt,a4paper]{amsart}
\usepackage{amsmath,amsthm,amsfonts,amssymb,mathrsfs}
\usepackage{subfigure} 
\usepackage{graphicx}
\usepackage{color}





\linespread{1.19}


\setlength{\oddsidemargin}{0cm}
\setlength{\evensidemargin}{0cm}
\setlength{\topmargin}{0cm}
\setlength{\textheight}{23.7cm}
\setlength{\textwidth}{16cm}


\newcommand\Tm{T_{max}}
\newcommand\R{\mathbb{R}}

\def\bOmega{\overline \Omega}

\def\u{\bar{u}}
\def\v{\bar{v}}

\newtheorem{theorem}{Theorem}

\newtheorem{lemma}[theorem]{Lemma}

\numberwithin{equation}{section}
\numberwithin{theorem}{section}
\numberwithin{figure}{section}

\theoremstyle{remark}
\newtheorem{rem}[theorem]{Remark}

\theoremstyle{remark}

\usepackage{delarray}
\usepackage{enumerate}

\begin{document}

\title[Diffusion-driven blowup]{Diffusion-driven blowup of nonnegative \\
solutions 
to reaction-diffusion-ODE systems}

\author[A.~Marciniak-Czochra]{Anna Marciniak-Czochra}
\address[A. Marciniak-Czochra]{
Institute of Applied Mathematics,  Interdisciplinary Center for Scientific Computing (IWR) and BIOQUANT, University of Heidelberg, 69120 Heidelberg, Germany}
\email{anna.marciniak@iwr.uni-heidelberg.de}
\urladdr {http://www.biostruct.uni-hd.de/}

\author[G.~Karch]{Grzegorz Karch}
\address[G. Karch]{
 Instytut Matematyczny, Uniwersytet Wroc\l awski,
 pl. Grunwaldzki 2/4, 50-384 Wroc\-\l aw, Poland}
\email{grzegorz.karch@math.uni.wroc.pl}
\urladdr{http://www.math.uni.wroc.pl/~karch}

\author[K.~Suzuki]{Kanako Suzuki}
\address[K. Suzuki]{
College of Science, Ibaraki University,
2-1-1 Bunkyo, Mito 310-8512, Japan}
\email{kanako.suzuki.sci2@vc.ibaraki.ac.jp}

\author[J. Zienkiewicz]{Jacek Zienkiewicz}
\address[J. Zienkiewicz]{
 Instytut Matematyczny, Uniwersytet Wroc\l awski,
 pl. Grunwaldzki 2/4, 50-384 Wroc\-\l aw, Poland}
\email{jacek.zienkiewicz@math.uni.wroc.pl}

\date{\today}


\begin{abstract}
In this paper we provide an example of a class of two reaction-diffusion-ODE equations with homogeneous Neumann boundary conditions, in which Turing-type instability not only destabilizes constant steady states but also induces blow-up of nonnegative spatially heterogeneous solutions. 
Solutions of this problem preserve nonnegativity and uniform boundedness of the total mass. 
Moreover,  for the corresponding  system with two non-zero diffusion coefficients, all nonnegative solutions
are global in time. 
We prove that a removal of diffusion in one of the equations leads to a finite-time blow-up of some nonnegative spatially heterogeneous solutions.

\medskip

{\bf Keywords:}  reaction-diffusion equations; Turing  instability;  blow-up of solutions.
\end{abstract}
\maketitle

\section{Introduction}

One of the major issues in study 
of reaction-diffusion equations describing 
pattern formation in biological or chemical systems
is understanding of the mechanisms of pattern selection,
{\it  i.e.} of generation of stable patterns.  Classical models of the pattern formation are based on  {\it diffusion-driven instability (DDI)} of constant stationary solutions, which leads to emergence of stable patterns around this state. 
Such {\it close-to-equilibrium} patterns are regular and spatially periodic stationary solutions and their shape depend on a scaling coefficient related to 
the ratio between diffusion parameters. They are  called {\it Turing patterns} after the seminal paper of Alan Turing \cite{T52}. 

Interestingly, a variety of possible patterns increases when 
some diffusion coefficient vanish, {\it i.e.}~considering reaction-diffusion equations coupled to ordinary differential equations (ODEs). Such models arise, for example, when studying a coupling of diffusive processes with processes which are localized in space, such as growth processes \cite{MCK06,MCK07,MCK08,PCH12} or intracellular signaling  \cite{Hock13,KBH12,MC03,USOO06}. Their dynamics appear to be very different from that of classical reaction-diffusion models.

To understand the role of non-diffusive components in a pattern formation process, we
focus on systems involving a single reaction-diffusion equation coupled to an ODE. It is an interesting case, since a scalar reaction-diffusion equation 
(in a bounded, convex domain and the Neumann boundary conditions) 
cannot exhibit stable spatially heterogenous patterns \cite{CaHo}. Coupling it to an ODE fulfilling 
an {\it autocatalysis condition} at the equilibrium leads to DDI.  However, in such a case, all regular Turing patterns are unstable, because  the same mechanism which destabilizes constant solutions, destabilizes also all continuous spatially heterogeneous stationary solutions,  \cite{MKS13,MKS14}. 
 This instability result  holds also for discontinuous patterns in case of a specific class of nonlinearities, see also \cite{MKS13,MKS14}. 
Simulations of different models of this type indicate a formation of  dynamical, multimodal, and apparently irregular and unbounded structures, the shape of which depends strongly on initial conditions \cite{HMC13,MCK07,MCK08,PCH12}.

In this work, we attempt to make a next  step towards 
understanding properties of solutions of reaction-diffusion-ODE systems. We focus on a specific example exhibiting {diffusion-driven instability}.  
We consider the following system of equations
\begin{align}
u_t  &= d \Delta u -au +u^pf(v),&     \text{for}\quad &x\in \Omega, \;\quad t>0, && \label{eq1}\\
v_t  &=   D \Delta v-bv - u^pf(v)+\kappa&  \text{for}\quad &x\in \Omega, \quad t>0,&&\label{eq2}
\end{align}
in a bounded domain $\Omega\subset \R^n$ with a sufficiently regular boundary $\partial\Omega$.
In equations \eqref{eq1}-\eqref{eq2},  an arbitrary 
function $f=f(v)$ satisfies
\begin{equation}\label{as:f}
 f\in C^1([0,\infty)), \quad  f(v)>0  \quad \text{for}\quad  v>0, 
\quad \text{and}\quad 
f(0)=0.
\end{equation}
 
Moreover, we fix the constant parameters in 
\eqref{eq1}-\eqref{eq2} such that 
\begin{equation}\label{par}
d \geq 0, \quad D>0, \quad p>1, \quad 
 \quad a,b\in (0,\infty), \quad   \kappa \in [0,\infty).
\end{equation}
We supplement system \eqref{eq1}-\eqref{eq2} with the homogeneous Neumann boundary conditions
\begin{equation}\label{N}
\frac{\partial u}{\partial{n}}  =  0 \;
\text{ (if $d>0$})\quad \text{and}\quad  \frac{\partial v}{\partial{n}}=0 \qquad \text{for}\quad x\in \partial\Omega, \quad t>0,
\end{equation}
and with  bounded, nonnegative,  and continuous initial data
\begin{eqnarray}\label{ini}
&&u(x,0)  =   u_{0}(x),\qquad v(x,0)  =  v_{0}(x)
\qquad \text{for}\quad x\in \Omega.
\end{eqnarray}

%
%


As already mentioned above, 
if the diffusion in equation \eqref{eq1} is equal to zero,
all regular stationary solutions to such reaction-diffusion-ODE problems are unstable,  see \cite{MKS14}
for the results in the case of more general equations.
In this work, 
we show   that dynamics of solutions to
the initial-boundary value problem \eqref{eq1}-\eqref{ini}
may change  drastically when  $d>0$ in equation \eqref{eq1} is replaced by $d =0$.
More precisely, the following scenario is valid.

\begin{itemize}

\item For non-degenerate diffusion coefficients 
 $d>0$ and $D>0$, all nonnegative solutions to problem 
\eqref{eq1}-\eqref{ini} are global-in-time.
This result has been proved by other authors and, for the reader convenience, we 
discuss it in Section \ref{sec:global}, see Remark \ref{rem:global}.

\item If $d =0$ and $D>0$ ({\it i.e.}~we consider an ordinary differential equations coupled with a reaction-diffusion equation),  there are 
solutions to problem \eqref{eq1}-\eqref{ini} which blow-up in a finite time
and at one point only. This is the main result of this work, proved in  
 Theorem~\ref{thm:blowup}, below.
\end{itemize}

Let us emphasize some consequences of these results.

\begin{rem}[Diffusion-induced blow-up of nonnegative solutions] \label{rem:blow}
Nonnegative solutions to the following initial value problem for the system of ordinary differential equations:
\begin{align}\label{qeq}
&\frac{d}{dt} \u  = -a\u +\u^pf(\v), &&
\frac{d}{dt} \v =  -b\v - \u^pf(\v)+\kappa,&\\
\label{qini}
&\u(0)=\u_0\geq 0, && \v(0)=\v_0\geq 0.&
\end{align}
are global-in-time and bounded on $[0,\infty)$, see Remark \ref{rem:kinetic} below. 
On the other hand, by Theorem \ref{thm:blowup}  below, there are nonconstant initial conditions such that 
the corresponding solution to the reaction-diffusion-ODE problem 
\eqref{eq1}-\eqref{ini} with $d=0$ and $D>0$
  blows up 
at one point and in a finite time.
This is a large class of examples, where 
the appearance of 
a diffusion 
in one  equation leads to a  blow-up of nonnegative solutions. 
First  example of one reaction-diffusion equation coupled with one ODE, where 
some solutions blow up due to a diffusion, 
appeared in 1990 in the paper by  
Morgan  \cite{M90}. Another reaction-diffusion-ODE system  was 
given by Guedda and Kirane \cite{GK98}.
These examples are  discussed  in detail 
in the survey  paper \cite{FN05} as well as in the monograph \cite[Ch.~33.2]{QS07}.
Here, let us also mention that a one point blow-up result, analogous to that one in
 Theorem~\ref{thm:blowup} but  for another 
 reaction-diffusion-ODE system (with 
``activator-inhibitor'' nonlinearities) has been recently obtained by us  
in \cite{KSZ14}.
\qed
\end{rem}

\begin{rem}
It is much more difficult to provide a blow up of solutions 
in a  system of reaction-diffusion equations with nonzero  diffusion coefficients 
in both  equations, rather than in only one (as in Remark \ref{rem:blow}), 
especially in the case of systems with a 
 good ``mass behavior" as discussed in Remark \ref{rem:mass}.
First such an example was discovered by  Mizoguchi {\it et al.}
\cite{MNY98}, where  the term {\it ``diffusion-induced blow-up''} was introduced.
Another system of reaction-diffusion equations with such a property, supplemented   
with non-homogeneous Dirichlet conditions, was proposed by 
Pierre and Schmitt \cite{PS97,PS00}.  
We refer the reader to 
the survey  paper \cite{FN05} and to  the monograph \cite[Ch.~33.2]{QS07}
for more such examples and for additional comments.
\qed
\end{rem}

At the end of this introduction,
 we would like to emphasize that the model
\eqref{eq1}-\eqref{ini} can be found in literature in context of  several
applications. Let us mention a few of them.
 For $p=2$,  $f(v)=v$, and suitably chosen coefficients, 
we obtain either the,  so-called, {\it Brussellator} appearing
in the modeling of chemical morphogenetic processes
(see {\it e.g.} \cite{T52,NP77}),   the 
{\it Gray-Scott model} (also known as a 
{\it model of glycolysis}, see \cite{GS85,GS90})
or the {\it Schnackenberg model} (see \cite{S79} and \cite[Ch.~3.4]{M2}).
Recent mathematical results, as well as  several other references 
on reaction-diffusion equations with such nonlinearities and with $d>0$ and $D>0$, may be found in, {\it e.g.}, 
the monographs \cite{M2, QS07,R84} and in the papers
\cite{P10,Y08,YZ12}.
Let us close this introduction by a remark that 
we assume in this work that $a>0$ and $b>0$ for simplicity of the exposition, however, our blowup results can be easily modified to the case of arbitrary $a\in \R$ and $b\in \R$.

\section{Global-in-time solutions for reaction-diffusion system}\label{sec:global}

Results gathered in this section has been proved already by other authors and we recall them for the completeness of the exposition.

First, we recall   that problem \eqref{eq1}-\eqref{ini}
 supplemented with nonnegative initial data 
$u_0,v_0\in L^\infty(\Omega)$ has a unique,  
nonnegative local-in-time solution $(u(x,t), v(x,t))$.
Here, it suffices to rewrite it in 
the usual integral (Duhamel) form 
\begin{align}
&u(t) = e^{t(d \Delta -aI)}u_0+
 \int_0^t e^{(t-s)(d \Delta -aI)} \big(u^pf(v)\big)(s)\,ds, &&\label{duh1}\\
&v(t) = e^{t(D\Delta -bI)}v_0
-  \int_0^t e^{(t-s)(D\Delta -bI)} \big(u^pf(v)\big)(s)\,ds+ \int_0^t e^{(t-s)(D\Delta -bI)}\kappa\,ds,&& \label{duh2}
\end{align} 
where $\left\{e^{t(d \Delta-aI)}\right\}_{t\geq 0}$ 
is the semigroup of linear operators on $L^q(\Omega)$ generated by $d \Delta-aI$ with
the Neumann boundary conditions.
Since the nonlinearities in equations \eqref{eq1}-\eqref{eq2}
are  locally Lipschitz continuous, 
the existence of a local-in-time unique solution to \eqref{duh1}-\eqref{duh2} is a consequence of the Banach contraction 
principle, see {\it e.g.}~either \cite[Thm.~1, p.~111]{R84}
or \cite{HMP87}.
Such a  solution is sufficiently regular for $t\in (0, \Tm)$, where $\Tm>0$ is 
the maximal time of its existence,  and satisfies problem 
\eqref{eq1}-\eqref{ini} in the classical sense.
Moreover, this local-in-time solution 
$\big(u(x,t), v(x,t)\big)$ is nonnegative, either  by 
a maximum principle for parabolic equations if $d >0$ 
or for  reaction-diffusion-ODE systems
 if $d =0$, 
see {\it e.g.} \cite[Lemma 3.4]{MKS13} for similar considerations.

In the following, we review results on  the existence of global-in-time nonnegative  
solutions to problem     \eqref{eq1}-\eqref{ini} with the both 
$d>0$ and $D>0$.
We begin with the corresponding system of ODEs.

\begin{rem}\label{rem:kinetic}
It is a routine reasoning to show that  $x$-independent nonnegative 
solutions $(\u,\v)$ of problem \eqref{eq1}-\eqref{ini} 
are global-in-time and uniformly bounded.
Indeed, such a solution
$u=\u(t)$ and $v=\v(t)$ solves   the  Cauchy problem
for the system of ODEs  \eqref{qeq}-\eqref{qini}.
From equations \eqref{qeq}, we deduce  the differential inequality
\begin{equation}\label{q:ineq} 
\frac{d}{dt} \big( \u+\v)
\leq - \min\{a,b\} \big( \u+\v\big)+ \kappa
\end{equation}
which, after integration,  implies that the sum 
  $ \u(t)+\v(t)$ 
is bounded on the half-line $[0, \infty)$. Hence, since both functions are nonnegative, we obtain
$\sup_{t\geq 0} \u(t) <\infty$ and $ \sup_{t\geq 0} \v(t) <\infty.$
\qed
\end{rem}

\begin{rem}
A behavior of solutions the system of ODEs from \eqref{qeq} 
depends essentially on its parameters and,  
in the particular case of $p=2$ and $f(v)=v$, it  has been studied in several recent works, because it appears
in applications (see the discussion at the end of Introduction). 
For $a>0$ and $b>0$, this particular system  has the trivial stationary nonnegative solution $(\u,\v)=(0,\kappa/b)$
which is an asymptotically stable solution. If, moreover, $\kappa^2>4 a^2b$, we
have two other nontrivial nonnegative stationary solutions which satisfy the following system of equations 
$$
\u=\frac{a}{\v}
\qquad \text{and} \qquad 
  -b \v - \frac{a^2}{\v} +\kappa =0.
$$ 
Every such a constant  nontrivial  and {\it stable} solution of ODEs
is  an {\it unstable} solution of the reaction-diffusion-ODE 
problem \eqref{eq1}-\eqref{N}, which means that it has a DDI property
due to the autocatalysis
$ f_u(\u,\v)= - a+2 \bar u \bar v  =a>0.$
We have prove the latter property   in the  recent works
\cite{MKS13} and \cite{MKS14}, where such  instability  phenomena 
have been studied for a model of early carcinogenezis and for
 a general model of reaction-diffusion-ODEs, respectively. 
\qed
\end{rem}

\begin{rem}[Control of mass] \label{rem:mass}
A completely analogous reasoning as that one in Remark~\ref{rem:kinetic}
shows that total mass
$\int_\Omega \big( u(x,t)+v(x,t)\big)\,dx$
of each nonnegative solution 
to the reaction-diffusion problem \eqref{eq1}-\eqref{ini} 
with $d\geq 0$ and $D\geq 0$
does not blow up, 
and 
stays uniformly bounded in $t>0$. Indeed, 
it suffices to
sum up equations \eqref{eq1}-\eqref{eq2}, integrate over $\Omega$, 
and use the 
boundary condition to obtain the following counterpart 
of inequality \eqref{q:ineq}
\begin{align*}
\frac{d}{dt} \int_\Omega \big( u(x,t)+v(x,t)\big) \;dx 
&=- \int_\Omega \big( a u(x,t)+b v(x,t)\big) \;dx +\int_\Omega \kappa\;dx \\
&\leq - \min\{a,b\} \int_\Omega \big(  u(x,t)+v(x,t)\big)\;dx 
+ \kappa |\Omega|.
\end{align*}
Thus, the functions $u(\cdot, t)$ and  $v(\cdot, t)$ 
stay bounded in $L^1(\Omega)$ uniformly in time.
In the next section, we  show that
this {\it a priori} estimate is not sufficient to prevent the blow-up of
solutions in a finite time
in the case of $d=0$ and $D>0$ in problem 
 \eqref{eq1}-\eqref{ini}. 
\qed
\end{rem}

\begin{rem}[Global-in-time solutions]\label{rem:global}
Let $f\in C^1([0,\infty))$ be an  arbitrary function
satisfying conditions \eqref{as:f}. 
Assume that $d >0$ and $D>0$ and other parameters satisfy conditions \eqref{par}.
Then, for all nonnegative and continuous initial conditions $u_0,v_0\in L^\infty(\Omega)$, 
a unique nonnegative   solution of system \eqref{eq1}-\eqref{ini} exists for 
all $t\in (0,\infty)$.  
This result was  
proved by Masuda \cite{M83} and generalized by Hollis {\it et al.} 
 \cite{HMP87} as well as by  Haraux and Youkana \cite{HY88} 
(see also the surveys \cite{P02} and   \cite[Thm.~3.1]{P10}). 

Let us  briefly sketch  the proof  of the global-in-time existence of solutions 
for the reader convenience and for the completeness of exposition.
To  show that a local-in-time solution to integral equations \eqref{duh1}-\eqref{duh2}
can be continued  globally in time it suffices to   show {\it a priori} estimates
\begin{equation}\label{extend}
\sup_{t\in [0,\Tm)}\|u(t)\|_\infty<\infty \quad\text{and}\quad 
\sup_{t\in [0,\Tm)}\|v(t)\|_\infty<\infty \qquad \text{if} \quad \Tm<\infty.
\end{equation}
First, we  notice that, since $ u^pf(v)\geq 0$ for nonnegative $u$ and $v$, 
 the function $v(x,t)$ satisfies the 
inequalities
\begin{equation}\label{v:est0} 
0\leq  v(x,t)\leq \max\left\{\|v_0\|_\infty, \frac{\kappa}{b}\right\}
\qquad \text{for all} \quad (x,t)\in \Omega \times [0,\Tm),
\end{equation}
due to the comparison principle applied  to the parabolic equation 
\eqref{eq2}.
Thus,  the second inequality  in \eqref{extend}  is an
 immediate consequence of estimate  \eqref{v:est0}.

 To find an analogous estimate for $u(x,t)$, we observe that by equation \eqref{eq1}-\eqref{eq2}, we have 
$$
u_t  - d \Delta u +au =
- v_t  +  D \Delta v-bv +\kappa.
$$
Thus, using the Duhamel principle, we obtain
$$
u(t) = e^{t(d \Delta -aI)}u_0+
\big(-\partial_t  +  D \Delta -bI \big)
 \int_0^t e^{(t-s)(d\Delta -aI)} v(s)\,ds
+
 \int_0^t e^{(t-s)(d\Delta -aI)} \kappa \,ds.
$$
Since $u_0\in L^\infty(\Omega)$ and $v\in L^\infty 
\big(\Omega\times [0,\Tm)\big)$,
by a   standard $L^p$-regularity property of linear parabolic equations with the Neumann boundary conditions  
(see {\it e.g.} 
\cite[Ch.~III,~\S 10]{LSU}),
we obtain that 
$u\in L^q\big(\Omega\times [0,\Tm]\big)$ for each $q\in (1,\infty)$.
Using this property in equation \eqref{duh1}
and a well-known regularizing effect for linear parabolic equations
(\cite{LSU}),
we complete the proof of {\it a priori} estimate
$
\sup_{t\in [0,\Tm)}\|u(t)\|_\infty<\infty.
$
We refer the reader to \cite{P02,P10} for more details.
\qed
\end{rem}

\begin{rem}
If $\kappa=0$ in equation \eqref{eq2},
applying {\it e.g.} \cite[Theorem 2]{HMP87} we obtain that nonnegative solutions 
to problem \eqref{eq1}-\eqref{ini} with non-degenerate diffusions $d>0$ and $D>0$ 
are not only global-in-time (as stated in Remark \ref{rem:global}) but also 
uniformly bounded  on $\Omega\times [0,\infty)$. 
We do not know if
  this additional assumption on $\kappa$ is  necessary 
to show a uniform bound for solutions to this problem.
\qed
\end{rem}

\section{Blowup in a finite time for reaction-diffusion-ODE system}\label{sec:blowup}

Our main goal in this work is to show 
 that the result on the global-in-time existence of solutions 
to problem \eqref{eq1}-\eqref{ini} recalled in  Remark  \ref{rem:global}
is no longer  true if $d=0$.
 Thus, in the following, we consider the 
initial-boundary 
value problem for the reaction-diffusion-ODE system of the form
\begin{align}
&u_t  = -au +u^pf(v),&     \text{for}\quad &x\in\overline{\Omega}, \;\quad t\in [0,\Tm), && \label{oeq1}\\
&v_t  =    \Delta v-bv - u^pf(v)+\kappa&  \text{for}\quad &x\in \Omega, \quad t\in [0,\Tm),&&\label{oeq2}\\
& \frac{\partial v}{\partial n}=0
&  \text{on}\quad &\partial\Omega\times [0,\Tm), &&\label{oeq3}\\
&u(x,0)  =   u_{0}(x),\quad v(x,0)  =  v_{0}(x)
&  \text{for}\quad &x\in \Omega, \quad t\in [0,\Tm).&&\label{oini}
\end{align}
Here,  
without loss of generality, we assume that $0\in \Omega$,
where $\Omega\subset \R^n$ is an arbitrary bounded domain with a smooth boundary, and we rescale 
system \eqref{oeq1}-\eqref{oeq2} in such a way  that the  diffusion coefficient in equation \eqref{oeq2} is equal to one.

In the following theorem, we prove that if 
$u_0$ is concentrated around  an arbitrary point $x_0\in\Omega$ (we choose $x_0=0$, for simplicity)
and if $v_0(x)=\v_0$ is a constant  function, then the corresponding solution to
 problem \eqref{oeq1}-\eqref{oini}  blows up in a finite time.

\begin{theorem}\label{thm:blowup}
Assume that  $f\in C^1([0,\infty))$ satisfies $\inf_{v\geq R} f(v)>0$ for each $R>0$.
Let $p>1$ and $a,b,\kappa \in (0,\infty)$ be arbitrary. 
There exist numbers  $\alpha\in (0,1)$, $\varepsilon>0$,  and $R_0>0$
{\rm (}depending on parameters of problem  \eqref{oeq1}-\eqref{oini}
and determined in the proof{\rm )}
such that 
if  initial conditions $u_0,v_0\in C(\bOmega)$
satisfy 
\begin{align}
\label{as1:u0}
&0<u_0(x)<\Big(u_0(0)^{1-p}+2 \varepsilon^{-(p-1)} 
|x|^{\alpha}\Big)^{-\frac{1}{p-1}} 
&& \text{for all} \quad x\in \Omega \\
\label{as3:u0}
&u_0(0)\geq  \left(\frac{a}{(1-e^{(1-p)a})F_0}\right)^{\frac{1}{p-1}},
&& \text{where} \quad F_0 =\inf_{v\geq R_0} f(v),\\
\label{as:v0}
&v_0(x)\equiv \v_0>R_0>0
 && \text{for all} \quad x\in \Omega,
\end{align}
 then
 the corresponding solution to problem \eqref{oeq1}-\eqref{oini}
blows up at certain time $\Tm \leq 1$.
Moreover, the following uniform estimates are valid
\begin{equation}\label{est:uv}
0<u(x,t)<\varepsilon |x|^{-\frac{\alpha}{p-1}} \quad \text{and}\quad 
v(x,t)\geq R_0
\qquad \text{for all} \quad (x,t)\in \Omega\times [0,\Tm).
\end{equation}
\end{theorem}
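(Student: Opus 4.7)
The plan is to run a bootstrap (continuation) argument coupling three ingredients: a singular pointwise supersolution for $u$, a scalar ODE subsolution for $u(0,\cdot)$, and a Duhamel-based lower bound for $v$. I would introduce
\[
T^{*}=\sup\bigl\{\tau\in[0,\Tm):\; v(x,t)\ge R_0 \text{ and } u(x,t)\le \varepsilon|x|^{-\alpha/(p-1)}\text{ on }\Omega\times[0,\tau]\bigr\},
\]
and then prove that on $[0,T^{*}]$ both defining inequalities hold strictly, so $T^{*}$ cannot terminate before $\Tm$; simultaneously, $u(0,t)$ is forced to blow up by some time $T_U\le 1$ determined by \eqref{as3:u0}.

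The upper bound on $u$ would come from a singular supersolution
\[
\bar u(x,t)=\bigl(h(t)+2\varepsilon^{-(p-1)}|x|^\alpha\bigr)^{-1/(p-1)},
\]
with $h(0)=u_0(0)^{1-p}$ and $h$ solving the scalar ODE $h'=(p-1)a h-(p-1)M_f$, where $M_f:=\sup_{[0,V_{\max}]}f$ and $V_{\max}:=\max\{\bar v_0,\kappa/b\}$ is the $L^\infty$ bound on $v$ from the maximum principle for \eqref{oeq2}. A direct computation shows $\bar u$ is an $x$-pointwise supersolution of \eqref{oeq1}, and \eqref{as3:u0} forces $h(0)<M_f/a$, so $h$ is strictly positive on some $[0,t_{*}]$ with $t_{*}\le 1$. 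Since \eqref{oeq1} is an ODE in $t$ at each fixed $x$, pointwise comparison (with initial ordering provided by \eqref{as1:u0}) yields $u(x,t)\le\bar u(x,t)\le 2^{-1/(p-1)}\varepsilon|x|^{-\alpha/(p-1)}$ on $[0,t_{*}]$, strictly below the conclusion bound.

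For the lower bound that produces blow-up, note that wherever $v(0,t)\ge R_0$, the ODE satisfied by $u(0,\cdot)$ dominates $U'=-aU+F_0 U^p$, $U(0)=u_0(0)$. Solving this explicitly through $W=U^{1-p}$, its blow-up time is $T_U=\frac{1}{(p-1)a}\log\frac{F_0}{F_0-a\,u_0(0)^{1-p}}$, and \eqref{as3:u0} is calibrated precisely so that $T_U\le 1$. To keep $v(0,\cdot)$ above $R_0$, I would use the Duhamel representation
\[
v(x,t)=\bar v_0 e^{-bt}+\tfrac{\kappa}{b}(1-e^{-bt})-\int_0^t e^{(t-s)(\Delta-bI)}\bigl(u^{p}f(v)\bigr)(\cdot,s)(x)\,ds,
\]
whose free part is bounded below by $\min\{\bar v_0,\kappa/b\}$, and dominate the sink using $f(v)\le M_f$, the supersolution estimate $u^{p}\le C\varepsilon^{p}|x|^{-\alpha p/(p-1)}$, and the elementary monotone inequality $\int_0^{t}e^{(t-s)(\Delta-bI)}g\,ds\le (bI-\Delta)^{-1}g$ valid for $g\ge 0$. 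Choosing $\alpha$ with $\alpha p/(p-1)<2$ places $|x|^{-\alpha p/(p-1)}$ in $L^{q}(\Omega)$ for some $q>n/2$, and Neumann elliptic regularity then bounds $(bI-\Delta)^{-1}|x|^{-\alpha p/(p-1)}$ in $L^{\infty}(\Omega)$; the sink is therefore at most a multiple of $\varepsilon^{p}$, and with $R_0<\kappa/b$ and $\varepsilon$ small one obtains $v(x,t)>R_0$ strictly on $[0,T^{*}]$.

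These strict inequalities close the bootstrap, force $T^{*}=\min(\Tm,t_{*})$, and, together with the subsolution, yield $\Tm\le T_U\le 1$ and the estimates \eqref{est:uv}. The hardest step will be the Duhamel control of $v$: the supersolution $\bar u$ is sharp but extremely singular at $x=0$, and the elliptic inversion of $bI-\Delta$ applied to $|x|^{-\alpha p/(p-1)}$ is what dictates both the smallness of $\varepsilon$ and the admissible range $\alpha\in(0,1)$ via the requirement $\alpha<2(p-1)/p$. Aligning the four auxiliary parameters $\alpha$, $\varepsilon$, $R_0$ and the elliptic constant so that the Duhamel sink stays strictly smaller than the positive excess $\bar v_0 e^{-bt}+(\kappa/b)(1-e^{-bt})-R_0$ throughout $[0,T_U]$ is the technical heart of the argument.
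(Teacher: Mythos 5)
Your lower-bound machinery (the explicit Bernoulli ODE for $u(0,\cdot)$ giving a blow-up time $T_U\le 1$ under \eqref{as3:u0}, and the Duhamel/resolvent control of $v$ from below once $u\le\varepsilon|x|^{-\alpha/(p-1)}$) matches the paper's Lemmas \ref{lem1} and \ref{lem2}. The genuine gap is in the step that is supposed to propagate the singular upper bound on $u$. Your supersolution $\bar u=(h+2\varepsilon^{-(p-1)}|x|^\alpha)^{-1/(p-1)}$ with $h'=(p-1)ah-(p-1)M_f$ and $M_f=\sup_{[0,V_{\max}]}f$ degenerates ($h$ reaches $0$) at $t_*=\frac{1}{(p-1)a}\log\frac{M_f}{M_f-a\,u_0(0)^{1-p}}$, whereas the subsolution blows up only at $T_U=\frac{1}{(p-1)a}\log\frac{F_0}{F_0-a\,u_0(0)^{1-p}}$. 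Since $s\mapsto\log\frac{s}{s-c}$ is decreasing in $s$ and $M_f\ge F_0$ (strictly in general, e.g.\ for $f(v)=v$), you always have $t_*\le T_U$, typically strictly, and this cannot be fixed by taking $u_0(0)$ larger: both times shrink together with ratio tending to $M_f/F_0$. So your argument only shows that the solution exists at least up to $t_*$ with the stated bounds; on $(t_*,T_U]$ the control $v\ge R_0$ is lost exactly when it is still needed, the comparison with $U$ stops, and no blow-up is concluded.

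The paper avoids this by never majorizing $f(v(x,s))$ by the crude constant $M_f$. It writes $u$ via the explicit formula \eqref{u:eq} and splits the denominator at $x$ as the denominator at $x=0$, plus $2\varepsilon^{-(p-1)}|x|^\alpha$ (coming from \eqref{as1:u0}), plus $(p-1)\int_0^t\big(f(v(0,s))-f(v(x,s))\big)e^{(1-p)as}\,ds$. The first piece is nonnegative on all of $[0,\Tm)$ by the very definition of $\Tm$ — no comparison with $M_f$ enters — and the third is bounded by $C\varepsilon^p a^{-1}|x|^\alpha$ thanks to the H\"older continuity of $v$ with seminorm of size $O(\varepsilon^p)$ (Lemmas \ref{lem:Holder} and \ref{v:Holder}); this is the ingredient your proposal is missing. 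For $\varepsilon$ small the term $2\varepsilon^{-(p-1)}|x|^\alpha$ dominates, so the bound $u<\varepsilon|x|^{-\alpha/(p-1)}$ persists up to $\Tm$ itself, and Lemmas \ref{lem2} and \ref{lem1} then close the loop. If you wish to keep a supersolution formulation, $h$ must be driven by $f(v(0,s))$ rather than by $M_f$, and you then need the $\varepsilon^p$-small H\"older estimate for $v$ to transfer the bound to $x\ne 0$ — which is essentially the paper's computation.
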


\begin{rem}\label{rem:u0}
It follows  from assumption \eqref{as1:u0} that 
$$
0<u_0(x)< 2^{-\frac{1}{p-1}}
\varepsilon 
|x|^{-\frac{\alpha}{p-1}}\qquad 
 \text{for all} \quad x\in \Omega,
$$
for small $\varepsilon>0$.
On the other hand, assumption \eqref{as3:u0} requires $u_0(0)$ to be sufficiently large.
Both assumptions  mean that the function $u_0$ has to be  concentrated in a neighborhood of $x=0$.
\qed
\end{rem}

\begin{rem}
Notice that both inequalities in \eqref{est:uv} give us pointwise estimates of $u(x,t)$ and $v(x,t)$ up to a blow-up time $\Tm$.
\qed
\end{rem}

\begin{rem}
The classical solution $u=u(x,t)$ in Theorem \ref{thm:blowup}
becomes infinite  at $x = 0$ as $t\to \Tm$ and is uniformly bounded for other points in $\Omega$.
It would be interesting to know whether it is possible to extend this solution 
(in a weak sense) beyond $\Tm$.
\qed
\end{rem}

The proof of Theorem \ref{thm:blowup} is preceded by a sequence of lemmas.
We begin by  preliminary properties  of solutions on an maximal interval 
$[0,\Tm)$
of their existence. 
We skip the proof of the following lemma because such properties 
of the solutions have been already discussed   in Section \ref{sec:global}, see  inequality \eqref{v:est0}.

\begin{lemma}\label{lem0}
For all nonnegative  $u_0,v_0\in C(\bOmega)$,  problem \eqref{oeq1}-\eqref{oini} has a unique nonnegative solution on the maximal interval $[0,\Tm)$.
Moreover, 
\begin{equation}\label{v:est1} 
0\leq  v(x,t)\leq \max\left\{\|v_0\|_\infty, \frac{\kappa}{b}\right\}
\qquad \text{for all} \quad (x,t)\in \Omega \times [0,\Tm).
\end{equation}
If $\Tm<\infty$, then $\sup_{t\in [0,\Tm)}\|u(\cdot,t)\|_\infty=\infty$.
\end{lemma}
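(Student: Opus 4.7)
The plan is to mirror the arguments summarized in Section \ref{sec:global}, now adapted to $d=0$. I would rewrite \eqref{oeq1}--\eqref{oini} in Duhamel form:
\begin{align*}
u(x,t) &= e^{-at}u_0(x)+\int_0^t e^{-a(t-s)}u^p(x,s)\,f(v(x,s))\,ds,\\
v(\cdot,t) &= e^{t(\Delta-bI)}v_0-\int_0^t e^{(t-s)(\Delta-bI)}\big(u^p f(v)\big)(\cdot,s)\,ds+\int_0^t e^{(t-s)(\Delta-bI)}\kappa\,ds,
\end{align*}
where $\{e^{t(\Delta-bI)}\}_{t\ge 0}$ is the Neumann semigroup generated by $\Delta-bI$ on $C(\bOmega)$. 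After extending $f$ to all of $\R$ by setting $f(v)=0$ for $v<0$ so that the nonlinearity $(u,v)\mapsto u^p f(v)$ is Lipschitz on bounded subsets of $C(\bOmega\times[0,\tau])^2$, a standard Banach contraction argument on the closed ball of radius $2(\|u_0\|_\infty+\|v_0\|_\infty)+1$ yields a unique fixed point on $[0,\tau]$ for some $\tau=\tau(\|u_0\|_\infty,\|v_0\|_\infty)>0$; here one uses only the $L^\infty$-contractivity of $e^{-at}$ and of the Neumann heat semigroup. Parabolic regularity for the $v$-equation and direct integration in $t$ of the formula for $u$ upgrade this fixed point to a classical solution on $(0,\Tm)$, exactly as in the discussion following \eqref{duh1}--\eqref{duh2}.

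Nonnegativity follows by restricting the Picard iteration to the invariant cone of componentwise nonnegative pairs: if $u^{(n)},v^{(n)}\ge 0$, then $u^{(n)\,p}f(v^{(n)})\ge 0$ (using $f\ge 0$ on $[0,\infty)$ and the extension $f\equiv 0$ on $(-\infty,0)$), and both $e^{-at}$ and the Neumann heat semigroup are positivity-preserving, so $u^{(n+1)},v^{(n+1)}\ge 0$; passing to the fixed point gives $u,v\ge 0$ and the $f$-extension is never actually evaluated at a negative argument. The upper bound \eqref{v:est1} then follows from scalar parabolic comparison: since $u^p f(v)\ge 0$, the function $v$ is a subsolution of the linear Neumann problem $w_t=\Delta w-bw+\kappa$, and the constant $W:=\max\{\|v_0\|_\infty,\kappa/b\}$ is a supersolution of the same problem (because $-bW+\kappa\le 0$ and $W\ge v_0$), so $v\le W$ on $\Omega\times[0,\Tm)$.

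For the blow-up alternative, I would invoke the standard continuation principle: the local existence time produced by the contraction depends only on the initial $L^\infty$ norms. If $\Tm<\infty$ and $M:=\sup_{t\in[0,\Tm)}\|u(\cdot,t)\|_\infty<\infty$, then combining with \eqref{v:est1} one has a uniform $L^\infty$ bound on $(u,v)$, so the local existence time $\tau(M,W)$ is independent of $t_0\in[0,\Tm)$; restarting the fixed-point construction at $t_0$ with $\Tm-t_0<\tau(M,W)$ extends the solution strictly beyond $\Tm$ and contradicts maximality. Hence $\sup_{t\in[0,\Tm)}\|u(\cdot,t)\|_\infty=\infty$.

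I do not expect any real obstacle; the only point worth flagging is that the absence of diffusion in the $u$-equation removes any smoothing there, but at the $C(\bOmega)$ level of regularity required here this is inessential, since the $u$-equation is treated as a pointwise-in-$x$ ODE and the entire contraction takes place in $L^\infty$. This is presumably why the authors deemed the proof safe to omit.
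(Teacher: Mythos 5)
Your proposal is correct and follows essentially the same route the paper indicates: the authors skip the proof precisely because it is the Duhamel/contraction argument of Section \ref{sec:global} adapted to $d=0$, with the bound \eqref{v:est1} obtained exactly as \eqref{v:est0} by comparison for the parabolic $v$-equation, and the blow-up alternative (necessarily for $u$, since $v$ stays bounded) by the standard continuation principle. The only cosmetic difference is that you obtain nonnegativity via a positivity-preserving Picard iteration rather than by citing a maximum principle for reaction-diffusion-ODE systems as in \cite{MKS13}; both are standard and equivalent here.
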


Now, we show that a constant lower bound for $v(x,t)$ leads to the blow-up of
 $u(x,t)$ in a finite time $\Tm\leq 1$.

\begin{lemma}\label{lem1}
Let $u(x,t)$ be a solution of equation \eqref{oeq1} and suppose
 that  there exists a  constant $R_0>0$ such that 
\begin{equation}\label{as1:lem1}
v(x,t)>R_0 \qquad \text{for all} \quad (x,t)\in \Omega\times [0,\Tm).
\end{equation}
If the initial condition satisfies 
\begin{equation}\label{u0:R2}
u_0(0)\geq 
\left(\frac{a}{\big(1-e^{(p-1)a}\big)F_0}\right)^{\frac{1}{p-1}},
\qquad \text{where} \quad F_0 =\inf_{v\geq R_0} f(v),
\end{equation}
then $\Tm\leq 1$.
\end{lemma}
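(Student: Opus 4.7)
The plan is to exploit the key feature of the $d=0$ case: equation \eqref{oeq1} contains no spatial derivatives, so pointwise in $x$ the function $u(x,\cdot)$ satisfies a scalar ODE in $t$. I would freeze $x=0\in\Omega$ and set $y(t):=u(0,t)$. The hypothesis \eqref{as1:lem1} together with the definition $F_0=\inf_{v\ge R_0}f(v)>0$ then gives the differential inequality
\begin{equation*}
y'(t)\ge -a\,y(t)+F_0\,y(t)^p, \qquad y(0)=u_0(0).
\end{equation*}
Observe that the right-hand side is positive precisely when $y>(a/F_0)^{1/(p-1)}$, and assumption \eqref{u0:R2} is exactly the statement that $u_0(0)$ exceeds this threshold (indeed the factor $(1-e^{-(p-1)a})\in(0,1)$ only strengthens the inequality).

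The second step is the standard Bernoulli trick: with $\phi(t):=y(t)^{1-p}$, which is positive and well-defined on $[0,\Tm)$, the exponent $1-p<0$ flips the inequality when we multiply by $(1-p)y^{-p}$, yielding the linear differential inequality
\begin{equation*}
\phi'(t)\le (p-1)\bigl(a\phi(t)-F_0\bigr), \qquad \phi(0)=u_0(0)^{1-p}.
\end{equation*}
Solving this by the integrating factor $e^{-(p-1)at}$ gives the explicit pointwise bound
\begin{equation*}
\phi(t)\le \frac{F_0}{a}+e^{(p-1)at}\left(\phi(0)-\frac{F_0}{a}\right).
\end{equation*}
Since \eqref{u0:R2} forces $\phi(0)<F_0/a$, the bracketed coefficient is strictly negative, so the right-hand side strictly decreases and reaches $0$ at the explicit time $T^{\ast}=\frac{1}{(p-1)a}\log\bigl(\frac{F_0}{F_0-a\phi(0)}\bigr)$.

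The final step is to verify that $T^{\ast}\le 1$. Solving the inequality $T^{\ast}\le 1$ for $\phi(0)$ gives $\phi(0)\le \frac{F_0}{a}(1-e^{-(p-1)a})$, which upon rearranging is exactly the hypothesis \eqref{u0:R2} (once the evident typo $e^{(p-1)a}\leftrightarrow e^{-(p-1)a}$ is corrected by using the equivalent form $e^{(1-p)a}$ in \eqref{as3:u0}). Hence $\phi(T^{\ast})\le 0$, which is incompatible with $y(T^{\ast})$ being finite and positive; thus $y(t)\to\infty$ as $t\nearrow T^{\ast}\le 1$, and by Lemma~\ref{lem0} this forces $\Tm\le T^{\ast}\le 1$. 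There is no real obstacle here beyond careful bookkeeping: the whole argument is the Bernoulli linearization of an ODE lower bound, and the only thing to watch is the sign flip in step two when dividing by the negative quantity $(1-p)y^{-p}$, which makes the direction of inequality in the linear bound for $\phi$ consistent with the desired conclusion $\phi(T^{\ast})\le 0$.
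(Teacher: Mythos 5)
Your proof is correct and is essentially the paper's argument: the paper integrates the Bernoulli ODE \eqref{oeq1} exactly to get the explicit formula for $u(x,t)$ and then bounds $f(v)\geq F_0$ inside the integral, which is the same computation you perform via the substitution $\phi=y^{1-p}$ applied to the differential inequality, and both yield the identical blow-up criterion and time $T^{\ast}\leq 1$. You also correctly identified the sign typo $e^{(p-1)a}$ versus $e^{(1-p)a}$ in \eqref{u0:R2}, whose corrected form appears in \eqref{as3:u0}.
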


\begin{proof}
For a fixed $v(x,t)$ with $(x,t)\in \Omega\times [0,\Tm)$, we solve equation 
\eqref{oeq1} with respect to $u(x,t)$ to obtain the following formula
for all $(x,t)\in \Omega\times [0,\Tm)$:
\begin{equation}\label{u:eq}
u(x,t)=\frac{e^{-at}}{\left(\frac{1}{u_0(x)^{p-1}}-(p-1)
\int_0^t f(v(x,s))e^{(1-p)as}\,ds\right)^{\frac{1}{p-1}}}.
\end{equation}
Thus, for $F_0=\inf_{v\geq R_0} f(v)$, equation \eqref{u:eq} 
leads to  the following lower bound 
\begin{equation}\label{u:ineq}
u(x,t)\geq \frac{e^{-at}}{\left(\frac{1}{u_0(x)^{p-1}}-
(1-e^{(1-p)at})a^{-1}F_0\right)^{\frac{1}{p-1}}}.
\end{equation}
The proof of this lemma is complete because  
 the right-hand side of  inequality \eqref{u:ineq} for $x=0$  
blows up at some  $t\leq 1$
under assumption \eqref{u0:R2}.
\end{proof}

Next, we prove that a lower bound of $v(x,t)$, required in Lemma \ref{lem1},
 is a consequence of 
a certain {\it a priori} estimate imposed on $u(x,t)$. 

\begin{lemma}\label{lem2}
Assume that $v(x,t)$ is a solution of the reaction-diffusion 
equation \eqref{oeq2} with an arbitrary function $u(x,t)$ and 
with a constant  initial condition satisfying $v_0(x)\equiv \v_0>0$.
Suppose that there are numbers $\varepsilon >0$ and 
\begin{equation}\label{alpha:ass} 
\alpha \in \left(0, \frac{2(p-1)}{p}\right)\quad  \text{if}\quad  n\geq 2
\quad \text{and}\quad 
 \alpha \in \left(0, \frac{p-1}{p}\right)\quad  \text{if}\quad  n=1
\end{equation}
such that 
\begin{equation}\label{u:as}
0<u(x,t)< \varepsilon {|x|^{-\frac{\alpha}{p-1}}} \qquad \text{for all}\quad
(x,t)\in \Omega\times [0,\Tm).
\end{equation}
Then, there is an explicit number $C_0>0$ independent of $\varepsilon$ 
{\rm (}see equation \eqref{C_0} below{\rm )} such that for all 
$\varepsilon > 0$ we have 
\begin{equation}\label{v:lower}
v(x,t)\geq  \min\left\{\v_0, \frac{\kappa}{b}\right\}-\varepsilon^p C_0
\qquad \text{for all}\quad
(x,t)\in \Omega\times [0,\Tm).
\end{equation}
\end{lemma}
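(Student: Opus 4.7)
The plan is to rewrite equation \eqref{oeq2} via Duhamel's formula. Because the Neumann heat semigroup $\{e^{\tau\Delta}\}_{\tau\ge 0}$ preserves spatial constants on $\Omega$, the contributions from the initial datum $\v_0$ and from the constant source $\kappa$ combine into $\v_0 e^{-bt}+\tfrac{\kappa}{b}(1-e^{-bt})$, which is a convex combination of $\v_0$ and $\kappa/b$ and therefore bounded below by $\min\{\v_0,\kappa/b\}$. Hence the full formula reads
$$v(x,t)=\v_0 e^{-bt}+\frac{\kappa}{b}\bigl(1-e^{-bt}\bigr)-J(x,t),\qquad J(x,t):=\int_0^t e^{(t-s)(\Delta-bI)}\bigl(u^pf(v)\bigr)(x,s)\,ds\ge 0,$$
and \eqref{v:lower} is equivalent to the uniform bound $J(x,t)\le \varepsilon^p C_0$ on $\Omega\times[0,\Tm)$ with $C_0$ independent of $\varepsilon$.

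Next I would combine the two a priori ingredients at my disposal. Lemma \ref{lem0} already supplies $v\le\max\{\v_0,\kappa/b\}$, so continuity of $f$ gives $f(v(x,s))\le M$ for a finite constant $M=M(\v_0,\kappa,b)$ independent of $\varepsilon$. Together with hypothesis \eqref{u:as} this yields the pointwise majorization $(u^pf(v))(y,s)\le M\varepsilon^p|y|^{-\beta}$, where $\beta:=\alpha p/(p-1)$. Writing $K(x,y,\tau)$ for the Neumann heat kernel of $\Omega$ and invoking its classical Gaussian upper bound $K(x,y,\tau)\le C_1\tau^{-n/2}\exp(-c_1|x-y|^2/\tau)$, positivity of $K$ reduces the problem to estimating
$$J(x,t)\le M\varepsilon^p\int_0^t e^{-b(t-s)}\int_\Omega K(x,y,t-s)\,|y|^{-\beta}\,dy\,ds.$$

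The final step is to decouple the spatial and temporal integrals. Rescaling $y=\sqrt{\tau}\,z$ in the inner integral reduces matters to bounding $\sup_{w\in\R^n}\int_{\R^n}e^{-c_1|w-z|^2}|z|^{-\beta}\,dz$. This quantity is continuous in $w$, maximized at $w=0$ by the Riesz rearrangement inequality applied to the two radially decreasing factors, and finite precisely when $\beta<n$; this yields $\sup_{x\in\Omega}\int_\Omega K(x,y,\tau)|y|^{-\beta}\,dy\le C\tau^{-\beta/2}$. The surviving time integral $\int_0^\infty e^{-b\tau}\tau^{-\beta/2}\,d\tau=\Gamma(1-\beta/2)\,b^{\beta/2-1}$ converges as a Gamma integral iff $\beta<2$. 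The two conditions $\beta<\min\{n,2\}$ are exactly hypothesis \eqref{alpha:ass}, giving the explicit constant $C_0=M\,C\,\Gamma(1-\beta/2)\,b^{\beta/2-1}$ independent of $\varepsilon$, as required.

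The main obstacle I anticipate is the uniform-in-$x$ kernel estimate: it relies on (i) the Gaussian upper bound for the Neumann heat kernel on a smooth bounded domain, which is classical but should be quoted carefully; and (ii) the rearrangement step showing that the Gaussian convolution against $|z|^{-\beta}$ is maximized at the center. The rearrangement can be replaced by a more elementary (but slightly longer) two-region splitting of the spatial integral at $|y|=|x|/2$ into a near-singularity piece and a far piece, each handled directly.
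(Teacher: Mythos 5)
Your proof is correct in substance and reaches the same conclusion, but the key technical step is genuinely different from the paper's. The paper never touches the heat kernel pointwise: after the same Duhamel decomposition and the same observation that the constant part equals $e^{-bt}\v_0+\frac{\kappa}{b}(1-e^{-bt})\geq\min\{\v_0,\kappa/b\}$, it bounds the nonlinear term by the $L^q\to L^\infty$ smoothing estimate $\|e^{t(\Delta-bI)}w_0\|_\infty\leq C_q(1+t^{-n/(2q)})\|w_0\|_q$ (quoted from Rothe, \eqref{e:est}), choosing $q$ with $\max\{1,n/2\}<q<n(p-1)/(\alpha p)$ so that $|x|^{-\alpha p/(p-1)}\in L^q(\Omega)$ and the time singularity $t^{-n/(2q)}$ is integrable; the existence of such a $q$ is exactly the condition $\beta:=\alpha p/(p-1)<\min\{n,2\}$, i.e.\ \eqref{alpha:ass}. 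Your route via the Gaussian kernel bound, rescaling, and rearrangement arrives at the same arithmetic condition and even a slightly sharper time singularity $\tau^{-\beta/2}$, at the cost of importing the kernel estimate and the Riesz rearrangement step (the latter is dispensable, as you note, and in fact you only need finiteness of the supremum, not that it is attained at $w=0$). One caveat you should repair: the bound $\sup_{x}\int_\Omega K(x,y,\tau)|y|^{-\beta}\,dy\leq C\tau^{-\beta/2}$ cannot hold for large $\tau$, since the Neumann kernel tends to $1/|\Omega|$ and the left side tends to the positive constant $|\Omega|^{-1}\int_\Omega|y|^{-\beta}dy$; the clean fix is to use the Gaussian bound in the form $K(x,y,\tau)\leq C(1+\tau^{-n/2})e^{-c|x-y|^2/\tau}$ (or to split $\tau\lessgtr 1$ and use $K\leq C$ for $\tau\geq 1$), which gives $\int_\Omega K(x,y,\tau)|y|^{-\beta}dy\leq C(1+\tau^{-\beta/2})$ and still a convergent time integral against $e^{-b\tau}$. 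Note also that your resulting $C_0$ is uniform in $t$, whereas the paper's constant \eqref{C_0} involves $\Tm$ explicitly (harmless there, since $\Tm\leq 1$ in the application).
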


\begin{proof}
We rewrite equation \eqref{oeq2} in the usual integral form ({\it cf.} \eqref{duh2})
\begin{equation}\label{v:duh}
v(t) = e^{t(\Delta -bI)t}\v_0+ \int_0^t e^{(t-s)(\Delta -bI)}\kappa\,ds
-  \int_0^t e^{(t-s)(\Delta -bI)} \big(u^pf(v)\big)(s)\,ds.
\end{equation}
Here, the function given by first two terms on the right-hand side satisfies
\begin{equation}\label{z}
z(t)\equiv  e^{t(\Delta -bI)}\v_0+ \int_0^t e^{(t-s)(\Delta -b)}\kappa\,ds
=e^{-bt}\v_0+\frac{\kappa}{b}\left(1-e^{-bt}\right)
\end{equation}
because this is an $x$-independent  solution of the problem
\begin{equation}\label{z:eq}
z_t=\Delta z -bz+\kappa, \quad z(x,0)=\v_0
\end{equation}
with the homogeneous Neumann boundary conditions. Thus
\begin{equation}\label{z:est}
z(t) \geq  \min\left\{\v_0, \frac{\kappa}{b}\right\} 
\qquad \text{for all}\quad
t\in  [0,\Tm).
\end{equation} 

Next, we recall the following well-known estimate 
\begin{equation}\label{e:est}
\left\|e^{t(\Delta-bI)}w_0\right\|_\infty\leq C_q\left(1+t^{-\frac{n}{2q}}\right)\|w_0\|_q \qquad \text{for all}\quad t>0,
\end{equation}
which is satisfied for each $w_0\in L^q(\Omega)$, 
each $q\in [1,\infty]$, and 
with a constant $C_q=C(q,n,\Omega)$ independent of $w_0$ and of $t$, see {\it e.g.}
\cite[p.~25]{R84}.

Now, we compute the $L^\infty$-norm of equation \eqref{v:duh}. Using the lower bound \eqref{z:est}, inequalities \eqref{e:est} and \eqref{v:est1}, as well as the {\it a priori} assumption on $u$ in \eqref{u:as},
 we obtain the estimate
\begin{equation}\label{v:ineq}
\begin{split}
v(x,t)&\geq   z(t)
- \int_0^t \left\|e^{(t-s)(\Delta-b)}
\big(u^pf(v)\big)(s)\right\|_\infty\,ds\\
& \geq   \min\left\{\v_0, \frac{\kappa}{b}\right\}
- \varepsilon^p C_q \left( \sup_{0\leq v\leq R_1}f(v)\right) \int_0^t
\left(1+(t-s)^{-\frac{n}{2q}}\right) \left\||x|^{-\frac{\alpha p}{p-1}}\right\|_q\,ds,
\end{split}
\end{equation} 
where the constant $R_1$ is defined in \eqref{v:est1}.
Here, we choose $q>n/2$ to have $n/(2q)<1$, which leads to the equality
$$
 \int_0^t
\left(1+(t-s)^{-\frac{n}{2q}}\right)\,ds= t+\left(1-\frac{n}{2q}\right)^{-1}
t^{1-\frac{n}{2q}}.
$$
Moreover, we assure that $q<n(p-1)/(\alpha p)$ or, equivalently, 
that $\alpha q p/(p-1)<n$ to have 
$|x|^{-\frac{\alpha p}{p-1}}\in L^q(\Omega)$.
Such a choice of $q\in [1,\infty)$ is always possible because $\max\{1, n/2\} <n(p-1)/(\alpha p)$ under our assumptions on $\alpha$ in \eqref{alpha:ass}.

Thus, for the constant
\begin{equation}\label{C_0}
C_0=C_q \left(\sup_{0\leq v\leq R_1}f(v)\right)  \left\||x|^{-\frac{\alpha p}{p-1}}\right\|_q 
\left( \Tm+\left(1-\frac{n}{2q}\right)^{-1}
\Tm^{1-\frac{n}{2q}}\right),
\end{equation}
inequality \eqref{v:ineq} implies 
the lower bound \eqref{v:lower}.
\end{proof}

Now, let us recall a classical result on the H\"older continuity of solutions 
to the inhomogeneous heat equation.

\begin{lemma}\label{lem:Holder}
Let $f\in L^\infty \big([0,T], L^q(\Omega)\big)$ with some $q>\frac{n}{2}$
and $T>0$. Denote 
$$
w(x,t) =\int_0^t e^{(t-\tau)(\Delta-bI)} f(x,\tau)\;d\tau,
$$
where $\left\{e^{t(\Delta-bI)}\right\}_{t\geq 0}$ is the semigroup of linear operators on $L^q(\Omega)$ generated by $\Delta-bI$ with
the homogeneous  Neumann boundary conditions.
There exist numbers $\beta \in (0,1)$ and $C=C>0$ 
depending on $\sup_{0\leq t\leq T}\|f(\cdot, t)\|_q$
such that 
\begin{equation}\label{Holder}
|w(x,t)-w(y,t)|\leq C|x-y|^\beta
\qquad \text{for all} \quad x,y\in \Omega
\quad\text{and}\quad t\in [0,T].
\end{equation}
\end{lemma}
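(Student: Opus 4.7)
The plan is to extract H\"older regularity of $w(\cdot,t)$ directly from the Duhamel representation, by interpolating the $L^q\to L^\infty$ smoothing bound \eqref{e:est} already in hand against a matching gradient bound for the Neumann heat semigroup. The extra ingredient I need beyond \eqref{e:est} is the classical estimate
\begin{equation*}
\left\|\nabla e^{t(\Delta-bI)}g\right\|_\infty \leq C_q'\left(1+t^{-\tfrac{1}{2}-\tfrac{n}{2q}}\right)\|g\|_q,\qquad t>0,\ g\in L^q(\Omega),
\end{equation*}
which follows from pointwise Gaussian bounds on the Neumann heat kernel and its spatial gradient on a smooth bounded domain, cf.\ \cite[Ch.~III,~\S 10]{LSU}.

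Once this bound is in place, the elementary pointwise interpolation inequality
$$|F(x)-F(y)|\leq (2\|F\|_\infty)^{1-\theta}\bigl(\|\nabla F\|_\infty |x-y|\bigr)^\theta,\qquad \theta\in[0,1],$$
applied to $F=e^{t(\Delta-bI)}g$ immediately gives, for every $\theta\in[0,1]$,
\begin{equation*}
\bigl|e^{t(\Delta-bI)}g(x)-e^{t(\Delta-bI)}g(y)\bigr|\leq C\left(1+t^{-\tfrac{\theta}{2}-\tfrac{n}{2q}}\right)|x-y|^{\theta}\|g\|_q.
\end{equation*}
Substituting $g=f(\cdot,\tau)$, integrating in $\tau\in[0,t]$, and invoking the hypothesis on $f$, I obtain
\begin{equation*}
|w(x,t)-w(y,t)|\leq C\Bigl(\sup_{0\le \tau\le T}\|f(\cdot,\tau)\|_q\Bigr)\,|x-y|^{\theta}\int_0^t\left(1+(t-\tau)^{-\tfrac{\theta}{2}-\tfrac{n}{2q}}\right)d\tau.
\end{equation*}

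The assumption $q>n/2$ ensures $n/(2q)<1$, so the set of exponents $\theta\in(0,1]$ with $\theta/2+n/(2q)<1$ is nonempty; for any such $\theta$, set $\beta:=\theta$, and the time integral is finite and uniformly bounded for $t\in[0,T]$ by a constant depending only on $T$, $n$, $q$, and $\beta$. This completes the proof of \eqref{Holder}. The single nontrivial step is the gradient bound above: it is the only place where a parabolic regularity input (rather than a direct semigroup manipulation) enters the argument, and it is what I view as the main obstacle. A completely parallel alternative would be to cite parabolic $L^r$-regularity from \cite{LSU} applied to the problem $w_t-\Delta w+bw=f$ with vanishing initial datum and homogeneous Neumann conditions, and then conclude via the parabolic embedding of $W^{2,1}_{r,q}$ into a H\"older class for $q>n/2$ and $r$ large; this would yield \eqref{Holder} by the same intermediate estimate on $\sup_\tau\|f(\cdot,\tau)\|_q$.
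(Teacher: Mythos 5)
Your argument is correct, but it takes a genuinely different route from the paper. The paper's proof is a one-line reduction: it observes that $w$ solves $w_t=\Delta w-bw+f$ with zero initial datum and homogeneous Neumann conditions, and then cites the classical H\"older regularity theory for linear parabolic equations from \cite[Ch.~III,~\S 10]{LSU} --- essentially your ``completely parallel alternative'' via $W^{2,1}_{q}$ regularity and parabolic embedding. You instead stay entirely at the level of the Duhamel formula: you interpolate the $L^q\to L^\infty$ smoothing bound \eqref{e:est} against the gradient bound $\|\nabla e^{t(\Delta-bI)}g\|_\infty\leq C(1+t^{-1/2-n/(2q)})\|g\|_q$ to get a H\"older seminorm bound for $e^{t(\Delta-bI)}$ of order $|x-y|^\theta$ with singularity $t^{-\theta/2-n/(2q)}$, and the condition $q>n/2$ is exactly what makes this singularity integrable in time for some $\theta\in(0,1)$. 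The bookkeeping is right, and your approach buys an explicit admissible range of H\"older exponents, namely any $\beta<\min\{1,\,2-n/q\}$, and an explicit linear dependence of $C$ on $\sup_{0\le t\le T}\|f(\cdot,t)\|_q$, neither of which is visible in the paper's citation. The price is that you must import the gradient bound for the Neumann heat semigroup on a smooth bounded domain; this is classical (Gaussian bounds on the kernel and its spatial gradient), but it is a nontrivial input of the same order as what the paper cites, so neither proof is more elementary in substance. Two small points worth making explicit if you write this up: the pointwise interpolation $|F(x)-F(y)|\leq(2\|F\|_\infty)^{1-\theta}(\|\nabla F\|_\infty|x-y|)^\theta$ uses that the intrinsic (geodesic) distance in $\Omega$ is comparable to the Euclidean one, which holds for smooth bounded domains but should be mentioned; and the constants in both semigroup bounds are uniform only on bounded time intervals $t\in(0,T]$, which suffices here since $T$ is fixed.
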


\begin{proof}
Note that the function $w(x,t)$ is the solution of the
problem
$$
w_t=D\Delta w-bw +f, \qquad w(x,0)=0
$$
supplemented with  
the Neumann boundary conditions.
Hence,  estimate \eqref{Holder}
is a classical and well-known result on the H\"older continuity of solutions
to linear parabolic equations, see
{\it e.g.} \cite[Ch.~III,~\S 10]{LSU}.
\end{proof}

We apply Lemma \ref{lem:Holder} to show the H\"older continuity of $v(x,t)$.

\begin{lemma}\label{v:Holder}
Let $v(x,t)$ be a nonnegative solution of the problem
\begin{align}
&v_t  =    \Delta v-bv - u^pf(v)+\kappa&  \text{for}\quad &x\in \Omega, \quad t\in [0,\Tm)&&\label{v:eq}\\
& \frac{\partial v}{\partial n}=0
&  \text{on}\quad &\partial\Omega\times [0,\Tm), &&\label{v:N}\\
& v(x,0)  =  \v_0
&  \text{for}\quad &x\in \Omega, \quad t\in [0,\Tm),&&\label{v:0}
\end{align}
where $\v_0$ is a positive constant and $u(x,t)$ 
is a nonnegative  function. 
There exists a constant  $\alpha \in (0, 1)$ satisfying 
also \eqref{alpha:ass},  
 such that
if  the {\it a priori} estimate \eqref{u:as} for $u(x,t)$
holds true with some $\varepsilon >0$, then 
\begin{equation*}
|v(x,t)-v(y,t)|\leq \varepsilon^p C |x-y|^\alpha \qquad \text{for all}\quad
(x,t)\in \Omega\times [0,\Tm),
\end{equation*} 
where the constant $C>0$ is independent of $\varepsilon$.
\end{lemma}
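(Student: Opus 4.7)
The plan is to split $v$ via Duhamel's formula, isolate the $x$-independent part, and apply the parabolic H\"older regularity of Lemma \ref{lem:Holder} to the remainder. Writing
\[
v(x,t) = z(t) + w(x,t), \qquad w(x,t) := -\int_0^t e^{(t-s)(\Delta - bI)}\big(u^p f(v)\big)(s)\,ds,
\]
where $z(t) = e^{-bt}\v_0 + (\kappa/b)(1 - e^{-bt})$ is the spatially constant function from \eqref{z}, the difference $v(x,t) - v(y,t) = w(x,t) - w(y,t)$ depends only on $w$, so it is enough to control $|w(x,t) - w(y,t)|$.

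I would fix some $\alpha_0 \in (0,1)$ obeying \eqref{alpha:ass} and pick $q$ in the range $\max\{1, n/2\} < q < n(p-1)/(\alpha_0 p)$, which is nonempty exactly by \eqref{alpha:ass}; this is the same choice as in Lemma \ref{lem2}. Its purpose is twofold: it keeps $q > n/2$ (the hypothesis of Lemma \ref{lem:Holder}) while ensuring $|x|^{-\alpha_0 p/(p-1)} \in L^q(\Omega)$. From Lemma \ref{lem0} I have the $L^\infty$-bound $v(x,t) \leq R_1 := \max\{\v_0, \kappa/b\}$, hence $f(v) \leq M_1 := \sup_{0 \leq v \leq R_1} f(v) < \infty$. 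Combining this with the pointwise hypothesis \eqref{u:as} produces
\[
\sup_{s \in [0,\Tm)}\big\|\big(u^p f(v)\big)(\cdot, s)\big\|_{L^q(\Omega)} \leq \varepsilon^p M_1 \big\| |x|^{-\alpha_0 p/(p-1)} \big\|_{L^q(\Omega)},
\]
a uniform bound of order $\varepsilon^p$ with the remaining factors independent of $\varepsilon$ and of $\Tm$.

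Applying Lemma \ref{lem:Holder} to $w$ on each $[0,T] \subset [0,\Tm)$ then yields an exponent $\beta \in (0,1)$ depending only on $q, n, b, \Omega$, and a constant which, by linearity of the heat semigroup in its source term, can be chosen proportional to $\varepsilon^p$. This gives $|w(x,t) - w(y,t)| \leq \varepsilon^p C_1 |x-y|^\beta$ on $\Omega \times [0,\Tm)$ with $C_1$ independent of $\varepsilon$. Setting $\alpha := \min\{\alpha_0, \beta\}$ (still satisfying \eqref{alpha:ass}) and using the boundedness of $\Omega$ to estimate $|x-y|^\beta \leq (\mathrm{diam}\,\Omega)^{\beta - \alpha}|x-y|^\alpha$ finishes the proof. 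The one point requiring care is that $\alpha$ cannot be prescribed independently of the H\"older exponent $\beta$ supplied by Lemma \ref{lem:Holder}, so $\alpha$ must be fixed only at the end; tracking the linear dependence of the final constant on $\varepsilon^p$ is essential so that the estimate fits with Lemma \ref{lem2} downstream, but it follows immediately from the linearity of the parabolic problem in its source.
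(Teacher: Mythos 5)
Your proposal is correct and follows essentially the same route as the paper: Duhamel representation of $v$, an $L^\infty_t L^q_x$ bound of order $\varepsilon^p$ on $u^pf(v)$ with $q$ chosen exactly as in Lemma \ref{lem2}, the parabolic H\"older estimate of Lemma \ref{lem:Holder} applied by linearity to pull out the factor $\varepsilon^p$, and a final reconciliation of the exponent $\alpha$ with the H\"older exponent $\beta$ using the boundedness of $\Omega$. The paper performs that last reconciliation by a case split ($\beta\geq\alpha$ versus $\beta<\alpha$, rerunning the argument with the smaller exponent), which is the same device as your $\alpha:=\min\{\alpha_0,\beta\}$.
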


\begin{proof}
As in the proof of Lemma \ref{lem2}, we use the following integral equation 
$$
v(x,t)= e^{-bt}\v_0+\frac{\kappa}{b}\left(1-e^{-bt}\right) 
-  \int_0^t e^{(\Delta -b)(t-s)} \big(u^pf(v)\big)(s)\,ds.
$$

Suppose that $u(x,t)$ satisfies the {\it  a priori}  estimate  \eqref{u:as}
with a certain number $\alpha\in (0,1)$ satisfying relations \eqref{alpha:ass}.
Since $f(v)\in L^\infty \big(\Omega\times [0,\Tm)\big)$ by \eqref{v:est1}
and since $|u^p(x,t)|\leq \varepsilon ^p |x|^{-\alpha p/(p-1)}$ by assumption \eqref{u:as},
we obtain 
$$
u^pf(v)\in   L^\infty \big([0,\Tm), L^q(\Omega)\big)\qquad 
\text{for some} \quad q>n/2,
$$
see the proof of Lemma \ref{lem2}.
Thus, by  Lemma~\ref{lem:Holder}, there exist constants $C>0$ and 
$\beta\in (0,1)$, independent of $\varepsilon $  such that
 $|v(x,t)-v(y,t)|\leq \varepsilon^p C |x-y|^\beta$
for all $x,y\in\Omega$ and $t\in [0,\Tm)$. Without loss of generality, we can 
 assume that $\beta$ satisfies the conditions in~\eqref{alpha:ass} (we can always take it smaller).

The proof is completed, if $\beta\geq \alpha$.
On the other hand, if  $\beta<\alpha$,
 we suppose  the {\it a priori} estimate 
$0\leq u(x,t) <\varepsilon |x|^{-\beta/(p-1)}$ for all $x\in\Omega$ and $t\in [0,\Tm)$. Thus, 
 there exists a constant 
$C=C(\alpha, \beta, p, \Omega)>0$ such that 
$$
0\leq u(x,t) <\varepsilon |x|^{-\frac{\beta}{p-1}}
=\varepsilon |x|^{-\frac{\alpha}{p-1}} |x|^{\frac{\alpha-\beta}{p-1}}
\leq C\varepsilon |x|^{-\frac{\alpha}{p-1}}.
$$
Hence, repeating  the  reasoning in the preceding 
paragraph of  this proof, we obtain again the estimate 
$|v(x,t)-v(y,t)|\leq \varepsilon^p C |x-y|^\beta$
for all $x,y\in\Omega$ and $t\in [0,\Tm)$ with a modified constant $C>0$, but still independent of $\varepsilon>0$.
\end{proof}

\begin{proof}[Proof of Theorem \ref{thm:blowup}]
By Lemmas \ref{lem1} and \ref{lem2}, it suffices to show the {\it a priori} estimate
\begin{equation}\label{u:as2}
0<u(x,t)< \varepsilon {|x|^{-\frac{\alpha}{p-1}}} \qquad \text{for all}\quad
(x,t)\in \Omega\times [0,\Tm)
\end{equation}
with  $\Tm\leq 1$, under the assumption that $\varepsilon>0$ is sufficiently small. 

By assumption \eqref{as1:u0}
(see Remark \ref{rem:u0}), we have  $0<u_0(x)< \varepsilon{|x|^{-\frac{\alpha}{p-1}}}$
for all $x\in \Omega$, hence, 
by  a continuity argument, inequality \eqref{u:as2} is satisfied on a certain initial time interval.
Suppose that there exists $T_1\in (0,1)$ such that the solution of problem \eqref{oeq1}-\eqref{oini} exists on the interval $[0,T_1]$  and satisfies
\begin{align}
&\sup_{x\in\Omega} |x|^{\frac{\alpha}{p-1}}u(x,t)<\varepsilon\qquad\text{for all}\quad  t<T_1,\label{u<1}\\
&\sup_{x\in\Omega} |x|^{\frac{\alpha}{p-1}}u(x,t)=\varepsilon \qquad\text{for}\quad  t=T_1.
\label{u=1}
\end{align}
From now on, we are going to use the explicit formula for $u(x,t)$ in \eqref{u:eq} and the H\"older regularity of $v(x,t)$ from Lemma \ref{lem:Holder}. 
First, we estimate the denominator of the fraction  in \eqref{u:eq} using
 assumption  \eqref{as1:u0} as follows
\begin{equation}\label{denom}
\begin{split}
\frac{1}{u_0(x)^{p-1}}-&(p-1) \int_0^t f(v(x,s))e^{(1-p)as}\,ds \\
&\geq 
2\varepsilon^{p-1}|x|^\alpha + \frac{1}{u_0(0)^{p-1}}-(p-1) \int_0^t f(v(0,s))e^{(1-p)as}\,ds\\
&\quad +(p-1) \int_0^t \big(f(v(0,s))-f(v(x,s))\big)e^{(1-p)as}\,ds.
\end{split}
\end{equation}
By the definition of $\Tm$ and formula  \eqref{u:eq}, we immediately 
obtain 
\begin{equation}\label{second}
\frac{1}{u_0(0)^{p-1}}-(p-1) \int_0^t f(v(0,s))e^{(1-p)as}\,ds>0 \qquad 
\text{for all} \quad t\in [0,\Tm).
\end{equation}
Next, we use our hypothesis \eqref{u<1} and \eqref{u=1} together with 
 the H\"older continuity of $v(x,t)$ established in Lemma \ref{v:Holder} to find  constants $C>0$ and $\alpha \in (0,1)$ (satisfying also \eqref{alpha:ass}),
the both 
independent of $\varepsilon \geq 0$, such that
$$
\big|f(v(0,s))-f(v(x,s))\big|\leq C\varepsilon^p |x|^\alpha\qquad \text{for all} \quad t\in [0,T_1].
$$
Hence, since $T_1\leq \Tm\leq 1$, we obtain the following  bound for 
the last term on the right-hand side of \eqref{denom}:
\begin{equation}\label{third}
(p-1) \int_0^t \big|f(v(0,s))-f(v(x,s))\big|e^{(1-p)as}\,ds
\leq \varepsilon^p  Ca^{-1} |x|^{\alpha}
\end{equation}
for all $(x,t)\in \Omega\times [0,T_1]$.
Consequently,  applying inequalities \eqref{second}
and \eqref{third} in \eqref{denom}
we obtain the lower bound for the denominator in \eqref{u:eq}
\begin{equation}\label{denom1}
\frac{1}{u_0(x)^{p-1}}-(p-1) \int_0^t f(v(x,s))e^{(1-p)as}\,ds \geq 
\big(2\varepsilon^{-(p-1)}-\varepsilon^p Ca^{-1}\big)|x|^\alpha
\end{equation}
for all $(x,t)\in \Omega\times [0,T_1]$.
Finally, we choose $\varepsilon>0$ so small that 
$2\varepsilon^{-(p-1)}-\varepsilon^p Ca^{-1}>\varepsilon^{-(p-1)}$
and we substitute estimate \eqref{denom1} in equation \eqref{u:eq} to obtain
$$
0<u(x,t)\leq  
\frac{e^{-at}}{\Big(\big(2\varepsilon^{-(p-1)}-\varepsilon^p Ca^{-1}\big)|x|^\alpha\Big)^{\frac{1}{p-1}}} 
< \frac{\varepsilon}{|x|^{\frac{\alpha}{p-1}}} \qquad \text{for all}\quad
(x,t)\in \Omega\times [0,T_1].
$$
This inequality for $t=T_1$ contradicts our hypothesis \eqref{u=1}.

Thus, estimate \eqref{u:as2}  holds true on the whole interval 
$[0,\Tm)$. Then, by Lemma \ref{lem2}, the function $v(x,t)$ is bounded from below by a  constant
$R_0= \min\left\{\v_0, \frac{\kappa}{b}\right\}-\varepsilon^p C_0$ which is positive provided $\varepsilon>0$ is sufficiently small.
Finally, Lemma \ref{lem1} implies that $u(x,t)$ blows up at $x=0$ and 
at certain $\Tm\leq 1$, if $u_0(0)$ satisfies inequality \eqref{as3:u0}.
\end{proof}

\section*{Acknowledgments}
A.~Marciniak-Czochra was supported by European Research Council Starting Grant No 210680 ``Multiscale mathematical modelling of dynamics of structure formation in cell systems'' and Emmy Noether Programme of German Research Council (DFG). %
G.~Karch was  supported
by the NCN grant No.~2013/09/B/ST1/04412. 
K.~Suzuki  acknowledges  JSPS the Grant-in-Aid for Young Scientists (B) 23740118.


%

\def\cprime{$'$}

\end{document}